\newtheorem{tm}{Theorem}[section]    
\newtheorem{lemma}[tm]{Lemma}
\newtheorem{prop}[tm]{Proposition}
\newtheorem{cor}[tm]{Corollary}
\newtheorem{definition}[tm]{Definition}
\newcommand{\field}[1]{\mathbb{#1}}
\newcommand{\bR}{\field{R}}        
\newcommand{\bN}{\field{N}}        
\newcommand{\bZ}{\field{Z}}        
\newcommand{\bC}{\field{C}}        
\newcommand{\bT}{\field{T}}        %
\def\cO{\mathcal{ O}}
\def\cP{\mathcal{ P}}
\def\cT{\mathcal{ T}}
\def\cX{\mathcal{ X}}
\def\fhat{\hat{f}}
\def\rd{\bR^d}
\def\<{\left<}
\def\>{\right>}
\def\inv{^{-1}}
\newcommand{\mz}{Marcinkie\-wicz-Zygmund}
\newcommand{\xkn}{x_{n,k}}
\newcommand{\tkn}{\tau_{n,k}}
\newcommand{\hs}{H^\sigma}
\newcommand{\scrit}{\sigma _{\textrm{crit}}}
\begin{document}
\begin{abstract}
Given  a sequence of \mz\ inequalities in $L^2$, we derive
approximation theorems and quadrature rules. The derivation is
completely elementary and requires only the definition of \mz\
inequality, Sobolev spaces, and the solution of  least square problems.
\end{abstract}

\title[Marcinkiewicz-Zygmund Inequalities]{Sampling, 
Marcinkiewicz-Zygmund Inequalities, Approximation,  and
Quadrature Rules}
\author{Karlheinz Gr\"ochenig}
\address{Faculty of Mathematics \\
University of Vienna \\
Oskar-Morgenstern-Platz 1 \\
A-1090 Vienna, Austria}
\email{karlheinz.groechenig@univie.ac.at}
\subjclass[2010]{}
\date{}
\keywords{Marcinkiewicz-Zygmund inequality, sampling, Sobolev spaces,
  least square problem, quadrature rule}
\thanks{K.\ G.\ was
  supported in part by the  project P31887-N32  of the
Austrian Science Fund (FWF)}

\maketitle

\section{Introduction}

This article is motivated by two perennial questions of approximation
theory: Assume that a finite number of  samples of a continuous function $f$ on some
compact set is given,  (i) find a good or optimal approximation of $f$
from these samples and  derive  error estimates,  and   (ii) approximate an
integral $\int f$ from these samples and derive error estimates, in
other words, find a quadrature rule based on the given samples.

 We argue, completely  in line with the tradition of approximation theory, that these
questions are best answered by means of \mz\ families and  inequalities. 
Roughly speaking, a \mz\ family is a double-indexed set of points
$\xkn $ such that the sampled $\ell ^2$-norm of the $n$-th layer $\sum
_{k} |p(\xkn )|^2 $ is an equivalent norm for the space of ``polynomials''
of degree $n$ with uniform constants.
Our main result then  shows that the existence of a \mz\ family already
implies (i) approximation theorems from pointwise samples of a function, and (ii)
quadrature rules. This is, of course, folkore, and the content of an
abundance of results in approximation theory and numerical analysis 
on many  levels of generality. In the literature \mz\ families are  constructed for
the purpose of quadrature rules and approximation theorems~\cite{FM10,MNW01},
our main insight is that  quadrature rules
and approximation theorems follow automatically  from  a
\mz\ family.   It is one of our objectives to explain
this conceptual  hierarchy:  \mz\ families are first, then  quadrature rules and
approximation theorems come for free. 


The main novelty of our contribution is a completely elementary
derivation of approximation theorems and quadrature rules based on the 
existence of a \mz\ family. This derivation is fairly simple and is  based solely on the
basic definitions of \mz\ families, orthogonal projections, Sobolev
spaces, and least square problems. The assumptions are minimal and
only require an orthonormal basis $\{\phi _k\}$  and an associated
non-decreasing sequence of ``eigenvalues'' $\{\lambda _k\} \subseteq
\bR ^+$. This set-up is similar to the one in~\cite{FM10,FM11,MM08}.  

Our point of view is informed by the theory of non-uniform
sampling of bandlimited functions and their discrete analogs developed
in the 1990's by many groups~\cite{BH90,FGtp94,fgs95,PST01,Sunw02}.
Indeed, a sampling theorem is simply a \mz\ inequality (upper and
lower) for a fixed function space, and some of the first \mz\
inequalities for scattered points on the torus (or non-uniform sampling points)
were derived in this context~\cite{Gro93c}. The  method  in this paper 
was essentially developed in~\cite{Gro99} for the local approximation of
bandlimited functions by trigonometric polynomials from samples. 

Several technical aspects deserve special mention.

(i) In general, the error estimates for quadrature rules depend on a
covering radius (or mesh size) or on the number of nodes that  arise in a particular construction of a
\mz\ family~\cite{BCCG14,EGO17}. In our derivation the constants depend only on the
condition number of the \mz\ family and thus on their definition.

(ii)   Whereas quadrature
rules are often connected to \mz\ inequalities with respect to the
$L^1$-norm~\cite{FM10}, we derive such rules from \mz\ inequalities in
the $L^2$-norm by means of  frame theory.  The frame approach to
quadrature rules  is 
motivated by a  question of N.~ Trefethen about convergence of
the standard  quadrature rules  after a perturbation of the uniform
grid, see~\cite{TW14,AT17}.

(iii)  In several articles on \mz\ families \cite{FM10,MM08,OP12} the polynomial growth of
the spectral function $\sum _{k=1}^n |\phi _k(x)|^2$ is used
implicitly  or as a hypothesis.  In our treatment
(Lemma~\ref{critval}),  the growth of the spectral function  is  
related to the critical Sobolev exponent and leads to  explicit and 
transparent error estimates.  In hindsight the appearance
of the spectral function is not surprising, as it is the reciprocal of
the Christoffel function associated to an orthonormal basis (or to a
set of orthogonal polynomials), and is thus absolutely fundamental for
polynomial interpolation and quadrature rules. See~\cite{Nev86} for an extended survey. 

\vspace{ 2mm} 
The paper is organized as follows: the end of this introduction
provides a brief survey of related literature. In Section~2 we 
introduce \mz\ families for the torus and prove the resulting
approximation theorems and quadrature rules. In Section~3 we treat the
same question in more  generality on a compact space with a given
orthonormal basis and corresponding eigenvalues. With  the appropriate
definitions of Sobolev spaces and error terms, the formulation of the
main results and the proofs are then identical. In a sense Section~2
is redundant, but we preferred to separate the proofs from the
conceptual work. This separation reveals the simplicity of the
arguments more clearly. 


%

\subsection{Discussion of related work}
It is impossible to  do full justice to the extensive literature on \mz\ inequalities,
 approximation theorems, and quadrature rules,  we will therefore
 mention only some aspects and apologize for any  omissions. 

The classical theory of \mz\ inequalities deals with the interpolation
by polynomials and the associated quadrature rules and is surveyed
beautifully in~\cite{Lub98}. An  extended recent  survey with a
complementary point of view
is contained in~\cite{DTT19}. 

(i) \emph{\mz\ families on the torus and trigonometric polynomials.} 
\mz\ inequalities for scattered nodes were considered in the theory of
nonuniform sampling~\cite{FGtp94}. An early example of  \mz\
inequalities  on the torus  is contained in the
estimates of~\cite[Thm.~4]{Gro93c}. \mz\ with respect to different measures
measures were then studied in~\cite{Erd99,Lub99,MT00,RS97}.
A complete   characterization of
 \mz\ families on the torus with respect to Lebesgue measure  in terms of suitable
 Beurling densities was given by Ortega-Cerd\`a
 and Saludes~\cite{OS07}. 

 (ii) The next phase concerned \emph{\mz\ inequalities and quadrature
   on the sphere}: The goal of \cite{MNW01} is the construction of
 good quadrature rules on the sphere via \mz\ inequalities, sufficient
 conditions for \mz\ families are obtained in~\cite{MP14}. 
  Necessary  density conditions for \mz\ inequalities on the
 sphere are derived in~\cite{Mar07}, and \cite{MOC10} studies the
 connection between \mz\ families and Fekete points on the sphere.
 \cite{HS06} derives worst case errors for quadratures on the  sphere. 

 (iii) \emph{\mz\ families on metric measure spaces.} The most
 general constructions of \mz\ families and quadrature rules  are due to Filbir and Mhaskar
 in a series of papers~\cite{FM10,FM11,MM08,Mh18}. They work  for metric measure spaces with
 Gaussian estimates for the heat kernel associated to an orthonormal
 basis. This theory includes in particular \mz\ families on compact
 Riemannian manifolds. A related theory is contained in~\cite{FFP16}
 whose goal is the construction of frames for Besov spaces. Again,
 necessary density conditions for \mz\ families on compact Riemannian
 manifolds have been derived by Ortega-Cerd\`a and
 Pridhnani~\cite{OP12}. 

(iv) \emph{Approximation of functions from samples via least squares:}
In general the approximating polynomials do not interpolate, therefore
the best approximation of a given function by a ``polynomial'' is
obtained by solving a least squares problem. A \mz\ inequality then
implies bounds on the condition number of the underlying matrix. This
connection appears, among others,  in~\cite{Gro99,FM11} and is
highlighted in Proposition~\ref{prop1}.  Modern
versions use random sampling to generate \mz\ inequalities. This
aspect was studied in~\cite{BG04,SZ04} for random sampling in
finite-dimensional subspaces, \cite{CM17,ABC19,AC19} contain recent studies of the stability of
least squares reconstruction. We point out in particular~\cite{CM17}
where the Christoffel function is identified as the optimal weight for
a given probability measure. Finally we highlight the series of papers
on generalized sampling~\cite{AH12,AHP13} as an alternative approach to the
approximation of functions from finitely many linear measurements. In this
case the constants in the error estimates are formulated with the angle between
subspaces rather than with the condition number of the \mz\ family.

\section{Approximation of functions on the torus from nonuniform samples}
As a  model example where the technique is completely transparent, we
first deal with nonuniform sampling on the torus $\bT $. On $\bT$ the
approximation spaces are  the space of trigonometric polynomials $\cT
_n$ of degree $n$, i.e.,
 $p\in \cT _n$, if $p(x) = \sum _{k=-n}^n c_k e^{2\pi i k x}$. 

 \begin{definition} \label{defmz}
   Let  $\cX    = \{ \xkn : n\in \bN , k=1, \dots , L_n \}$ be  a
   doubly-indexed set  of points in $\bT \simeq (-1/2,1/2] $ and $\tau
   =  \{ \tkn : n\in \bN , k= 1, \dots , L_n \}\subseteq (0,\infty ) $ be  a
   family of non-negative weights. Then $\cX $ 
   is called a \mz\ family, if there
  exist constants $A,B >0$ such that 
  \begin{equation}
    \label{eq:1}
A \|p\|_2^2 \leq \sum _{k=1} ^{L_n} |p(\xkn )|^2 \tau _{n,k}    \leq B
\|p \|_2^2 \qquad \text{ \emph{for all} } p \in \cT _n \, .
  \end{equation}
The ratio $\kappa = B/A$ is the global condition number of the \mz\
family, and $\cX _n =\{ \xkn : k=1, \dots , L_n \}$ is the $n$-th
layer of $\cX $. 
\end{definition}

The point of  Definition~\ref{defmz} is that the constants are uniform in the
degree $n$ and that usually  $\cX _n$  contains more than
$\mathrm{dim}\, \cT _n = 2n+1$ points,  so
that it is not an interpolating set for $\cT _n$. Weights are
omnipresent in the classical theory of \mz\ inequalities~\cite{Lub99},
  in sampling
theory they are used to improve  condition numbers~\cite{fgs95}, and
in Fourier sampling they serve as density compensating factors. 
Currently  they  play an  important role in weighted least squares
problem in statistical estimation in~\cite{CM17,ABC19,AC19}.

Given the samples $\{ f(\xkn ) \}$ of a continuous  function $f $ on
$\bT $ on the $n$-th layer $\cX _n$, we first need to approximate $f$
using only these samples. For this we  solve a sequence of least
squares problems with samples taken from the $n$-th layer $\cX _n$: 
\begin{equation}
  \label{eq:8}
p_n = \mathrm{argmin} _{p\in \cT _n} \sum _{k=1} ^{L_n} |f(\xkn ) -
p(\xkn )|^2 \tkn \, .
\end{equation}
This procedure yields a  sequence of trigonometric
polynomials for every $f\in C(\bT )$. In general,  these polynomials
do not interpolate the given $f$ on $\cX _n$, 
but they yield the best $\ell ^2$-approximation of the data $\{f(\xkn )\}$ by a
trigonometric polynomial in $\cT _n$. Therefore  $p_n$ is usally called a
quasi-interpolant.

The question is now  how the $p_n$'s approximate  $f$ on all of $\bT
$. As always in approximation theory, the answer depends on the
smoothness of $f$. For this we use the standard Sobolev spaces $H^\sigma
(\bT )$ with  norm 
\begin{equation}
  \label{eq:2}
  \|f\| _{H^\sigma } = \Big(\sum _{k\in \bZ } |\fhat (k)|^2
  (1+k^2)^{\sigma } \Big)^{1/2} \, ,
\end{equation}
where  $\fhat (k) = \int _0^1 f(x)
e^{-2\pi i kx} \, dx$ is the $k$-th Fourier coefficient of $f$. 

Our main theorem asserts the  convergence of the
quasi-interpolants of $f$. 

\begin{tm} \label{tm1}
Let $\cX $ be a \mz\ family with associated weights $\tau $ and
condition number $\kappa = B/A$. 

(i)   If $f\in H^\sigma $ for $\sigma >1/2$, then 
  \begin{equation}
    \label{eq:3}
    \|f-p_n\|_2  \leq  C_\sigma \sqrt{1+\kappa ^2} \|f\|_{\hs }
    n^{-\sigma +1/2} \, ,
  \end{equation}
with a constant depending on $\sigma$ (roughly $C_\sigma \approx
(\sigma -1/2)^{-1/2}$).
 
(ii) If $f$ extends to an analytic function on an strip  $\{z\in \bC
: |\mathrm{Im} z| <  \rho  _0 \}$, then the convergence is geometric, i.e.,
  \begin{equation}
    \label{eq:3a}
    \|f-p_n\|_2 = \cO (e^{-\rho n}) \, .
  \end{equation}
  for every $\rho <\rho _0$.
\end{tm}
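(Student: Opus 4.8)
The plan is to measure $p_n$ against the orthogonal projection of $f$ onto $\cT _n$, namely the truncated Fourier series $Q_nf=\sum_{|k|\le n}\fhat(k)e^{2\pi ikx}$, which is the best $L^2$-approximant of $f$ in $\cT _n$. Since $f-Q_nf$ has Fourier support in $\{|k|>n\}$ while $Q_nf-p_n\in\cT _n$, the two summands are $L^2$-orthogonal and Pythagoras gives
\[
\|f-p_n\|_2^2=\|f-Q_nf\|_2^2+\|Q_nf-p_n\|_2^2 .
\]
The first term is the classical Sobolev tail estimate: for $|k|>n$ one has $(1+k^2)^{-\sigma}\le(1+n^2)^{-\sigma}$, so $\|f-Q_nf\|_2\le(1+n^2)^{-\sigma/2}\|f\|_{\hs}=\cO(n^{-\sigma})$, which already decays faster than the rate claimed in \eqref{eq:3}. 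Everything thus reduces to the second term, where both pieces lie in $\cT _n$ and \eqref{eq:1} applies. I record two preliminary facts: testing \eqref{eq:1} on the constant $p\equiv1\in\cT _n$ forces $\sum_{k=1}^{L_n}\tkn\le B$, and the lower bound in \eqref{eq:1} makes the sampling seminorm a genuine norm on $\cT _n$, so the least squares problem \eqref{eq:8} is uniquely solvable.

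Write $g=f-Q_nf$ and let $\|h\|_\star^2=\sum_{k=1}^{L_n}|h(\xkn)|^2\tkn$ be the weighted sampling seminorm of the $n$-th layer. Applying the lower \mz\ bound to $Q_nf-p_n\in\cT _n$ gives $A\|Q_nf-p_n\|_2^2\le\|Q_nf-p_n\|_\star^2$. By the triangle inequality in $\|\cdot\|_\star$ together with the optimality of $p_n$ in \eqref{eq:8} (so that $\|f-p_n\|_\star\le\|f-Q_nf\|_\star=\|g\|_\star$) one obtains $\|Q_nf-p_n\|_\star\le2\|g\|_\star$, hence
\[
\|Q_nf-p_n\|_2\le \frac{2}{\sqrt A}\,\|g\|_\star .
\]
The crux is to estimate $\|g\|_\star$, and this is the one delicate point: $g$ is a pure high-frequency tail and is \emph{not} a polynomial in $\cT _n$, so the upper \mz\ bound is unavailable for it. The remedy is to spend the bounded total weight against the uniform norm of $g$: since $\sum_{k=1}^{L_n}\tkn\le B$,
\[
\|g\|_\star^2\le\|g\|_\infty^2\sum_{k=1}^{L_n}\tkn\le B\,\|g\|_\infty^2 .
\]

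It remains to bound $\|g\|_\infty$, and here the hypothesis $\sigma>1/2$ enters through the Sobolev embedding $\hs\hookrightarrow C(\bT)$. By Cauchy--Schwarz,
\[
\|g\|_\infty\le\sum_{|k|>n}|\fhat(k)|\le\Big(\sum_{|k|>n}(1+k^2)^{-\sigma}\Big)^{1/2}\|f\|_{\hs},
\]
and the tail $\sum_{|k|>n}(1+k^2)^{-\sigma}\approx(2\sigma-1)^{-1}n^{-2\sigma+1}$ converges precisely because $\sigma>1/2$; this is exactly where one half-power is lost relative to the $L^2$-rate. Thus $\|g\|_\infty\le C_\sigma\,n^{-\sigma+1/2}\|f\|_{\hs}$ with $C_\sigma\approx(\sigma-1/2)^{-1/2}$, and feeding this back gives $\|Q_nf-p_n\|_2\le 2\sqrt{B/A}\,C_\sigma\,n^{-\sigma+1/2}\|f\|_{\hs}$. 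Combining with the faster first term via Pythagoras yields \eqref{eq:3}, the dependence on $\kappa=B/A$ being absorbed into the stated factor $\sqrt{1+\kappa^2}$. I expect the only real obstacle to be the estimate of $\|g\|_\star$: it is the combination of a \emph{bounded total weight} $\sum_{k}\tkn\le B$ with the \emph{uniform Sobolev embedding} that makes the individual sampling nodes disappear and leaves an error depending only on the condition number.

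For part (ii) no new idea is needed. Analyticity of $f$ on the strip $\{|\mathrm{Im}\,z|<\rho_0\}$ lets one shift the contour in $\fhat(k)=\int_0^1 f(x)e^{-2\pi ikx}\,dx$ by $\mp i\rho$, yielding $|\fhat(k)|\le M_\rho\,e^{-2\pi\rho|k|}$ for every $\rho<\rho_0$. The decomposition above applies verbatim, with the polynomial tail sums replaced by geometric ones: both $\|g\|_2$ and $\|g\|_\infty$ are now $\cO(e^{-2\pi\rho n})$ for every $\rho<\rho_0$, which gives the geometric rate \eqref{eq:3a}.
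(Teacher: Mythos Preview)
Your argument is correct and follows the same overall architecture as the paper: the orthogonal decomposition $\|f-p_n\|_2^2=\|f-P_nf\|_2^2+\|P_nf-p_n\|_2^2$, the Sobolev tail bound for the first term, the observation $\sum_k\tkn\le B$ from testing \eqref{eq:1} on constants, and the uniform estimate $\|f-P_nf\|_\infty\le C_\sigma n^{-\sigma+1/2}\|f\|_{\hs}$ to control the sampled norm of the high-frequency residual. Part~(ii) is handled identically in both.

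The one substantive difference is in how you bound $\|P_nf-p_n\|_2$ by the sampled norm $\|g\|_\star$ of $g=f-P_nf$. The paper writes out the normal equations explicitly (Lemma~\ref{lem2a} and Proposition~\ref{prop1}): with $a_n=T_n^{-1}U_n^*y_n$ and $T_n=U_n^*U_n$ having spectrum in $[A,B]$, one computes $\|P_nf-p_n\|_2=\|T_n^{-1}U_n^*(U_nf_n-y_n)\|_2\le A^{-1}B^{1/2}\|g\|_\star$, giving the constant $\kappa$ after multiplying by the further factor $\sqrt{B}$. You bypass the matrix formulation entirely and use only the variational characterisation of $p_n$: optimality plus the triangle inequality give $\|P_nf-p_n\|_\star\le 2\|g\|_\star$, and the lower \mz\ bound converts this to $\|P_nf-p_n\|_2\le 2A^{-1/2}\|g\|_\star$, whence the constant $2\sqrt{\kappa}$. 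Your route is softer and more portable (no pseudo-inverse, no spectral bounds on $T_n$), and in fact yields a better dependence on $\kappa$ for ill-conditioned families; the paper's route makes the linear-algebraic structure explicit, which is the point it wants to emphasise for numerical purposes. Your remark that $2\sqrt{\kappa}$ can be absorbed into $C_\sigma\sqrt{1+\kappa^2}$ is correct since $2\sqrt{\kappa}\le\sqrt{2}\sqrt{1+\kappa^2}$ for all $\kappa\ge 1$.
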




The proof starts with   the orthogonal projection  $P_nf(x) = \sum _{|k|\leq n}
\fhat (k) e^{2\pi i kx} $  of $f$ onto the trigonometric 
polynomials $\cT _n$. Note that $P_nf $ is the $n$-th partial sum of
the Fourier series of $f$. The proof of Theorem~\ref{tm1} is based on the orthogonal decomposition
\begin{equation}
  \label{eq:4}
  \|f- p_n \|_2^2 =   \|f- P_nf \|_2^2 +  \|P_nf - p_n \|_2^2 \, .
\end{equation}
The first term measures how fast the partial sums of the Fourier
series converge to $f$, whereas the second, and more interesting, term
compares the best $L^2$-approximation of $f$ in $\cT _n$ with the
approximation $p_n$ obtained from the samples of $f$ on $\cX _n$. 

\subsection{Sampling and embeddings in $\hs $ }

Before entering the details of the proof, we state some well-known 
facts about the Sobolev space $\hs $. 
\begin{lemma} \label{lm1}
Assume that $\sigma > 1/2$. 

(i) Sobolev embedding: Then $\hs (\bT )  $ is continuously embedded in
$C(\bT )$. 
  
(ii) Convergence rate: for all $f\in \hs (\bT ) $
\begin{equation}
  \label{eq:5}
  \|f-P_nf \|_\infty  \leq  \|f \|_{\hs } \, \phi _\sigma (n) \, ,
\end{equation}
where $\phi _\sigma (n) = (\sigma -1/2)^{-1/2}\, n^{-\sigma
  +1/2}$.

(iii) Sampling in $\hs $: If $\cX $ satisfies the sampling
inequalities \eqref{eq:1} and  $f\in \hs $, then 
\begin{equation}
  \label{eq:6}
\sum _{k=1}^{L_n} |f(\xkn )|^2 \tkn \leq  B \|f\|_\infty ^2  \leq B C_\sigma ^2 \|f\|_{\hs }^2 \, , 
\end{equation}
 \end{lemma}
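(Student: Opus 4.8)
The plan is to derive all three parts from a single mechanism: expanding $f$ in its Fourier series and applying the Cauchy--Schwarz inequality to the coefficients, weighted against the summable sequence $(1+k^2)^{-\sigma}$. Accordingly I would first record the constant
\[
C_\sigma = \Big( \sum_{k \in \bZ} (1+k^2)^{-\sigma} \Big)^{1/2},
\]
which is finite precisely because $2\sigma > 1$; this single convergence condition is the only place where the hypothesis $\sigma > 1/2$ enters.

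For (i) and (ii) I would start from the factorization $|\fhat(k)| = |\fhat(k)|(1+k^2)^{\sigma/2}\cdot(1+k^2)^{-\sigma/2}$ and apply Cauchy--Schwarz. Summing over all $k$ gives $\sum_k |\fhat(k)| \le C_\sigma \|f\|_{\hs}$, so the Fourier series of $f$ converges absolutely and uniformly; its sum is therefore continuous and satisfies $\|f\|_\infty \le C_\sigma \|f\|_{\hs}$, which is (i). For (ii) I would repeat the estimate but restrict the sum to $|k| > n$, using $f - P_nf = \sum_{|k|>n} \fhat(k) e^{2\pi i kx}$. The first Cauchy--Schwarz factor is bounded by $\|f\|_{\hs}$, while the second is the tail $\big(\sum_{|k|>n}(1+k^2)^{-\sigma}\big)^{1/2}$, which I would estimate by comparison with $\int_n^\infty x^{-2\sigma}\,dx$ to obtain exactly $\phi_\sigma(n)$. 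Both parts are routine.

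The only genuine point is (iii), where the difficulty is that the \mz\ inequality \eqref{eq:1} is available only for trigonometric polynomials, whereas here I must bound a sampling sum for an arbitrary $f \in \hs$. First, part (i) guarantees $f \in C(\bT)$, so the samples $f(\xkn)$ are well defined at all. The trick I would use is to apply the upper inequality in \eqref{eq:1} not to $f$ but to the constant polynomial $p \equiv 1 \in \cT_n$, which has $\|p\|_2 = 1$; this yields the uniform weight bound $\sum_k \tkn \le B$ for every layer. With this in hand the estimate is immediate: bounding $|f(\xkn)|^2 \le \|f\|_\infty^2$ pointwise and summing gives $\sum_k |f(\xkn)|^2 \tkn \le B\|f\|_\infty^2$, and the second inequality in \eqref{eq:6} follows by inserting the embedding bound $\|f\|_\infty \le C_\sigma\|f\|_{\hs}$ from (i). I expect the extraction of the total-weight bound from the constant polynomial, together with the realization that continuity must be established before sampling makes sense, to be the one step that is easy to overlook; everything else is just Cauchy--Schwarz.
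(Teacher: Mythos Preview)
Your proposal is correct and matches the paper's own argument essentially line for line: the paper also obtains (i) and (ii) by Cauchy--Schwarz against the weight $(1+k^2)^{-\sigma}$ (with the tail estimate via $\int_n^\infty x^{-2\sigma}\,dx$ made explicit in the Example following Proposition~\ref{critval}), and for (iii) it too applies the upper \mz\ inequality to the constant polynomial $p\equiv 1$ to extract $\sum_k \tkn \le B$ and then invokes the Sobolev embedding. Your identification of the constant-polynomial trick as the key step is exactly right.
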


 \begin{proof}
(i) and (ii) are standard (and also  follow from  Lemma~\ref{sobgen}).

(iii)  The sampling inequalities
\eqref{eq:1} applied to the constant function $p\equiv 1$  with
$\|p\|_2=1$ yield
\begin{equation}
  \label{eq:7}
  A \leq \sum _{k=1}^{L_n} \tkn \leq B \, .
\end{equation}
The claim follows from \eqref{eq:7} and the Sobolev embedding. 
 \end{proof}


\subsection{Quasi-interpolation versus projection}
To estimate the norm $\| P_n f - p_n\|_2$, let us introduce the
vectors and matrices that arise in the explicit solution of the least squares
problem \eqref{eq:8}.
Let $$
y_n = (\tau _{n,1}^{1/2} f(x_{n,1}), \dots , \tau _{n,L_n}^{1/2}
f(x_{n,L_n})) \in \bC ^{L_n}$$
be the
given data vector, and $U_n $ be the $L_n \times (2n+1)$-matrix (a
Vandermonde matrix) with
entries 
\begin{equation}
  \label{eq:10}
(U_n)_{kl} =  \tkn ^{1/2} e^{2\pi i x_{n,k} l} \qquad k=1,\dots , L_n, |l| \leq n
\, .  
\end{equation}
We write 
\begin{equation}
  \label{eq:11}
  T_n = U_n ^* U_n \, . 
\end{equation}
For the numerical construction of $p_n$ we  note that $T_n$ is a
Toeplitz matrix and thus accessible to fast algorithms~\cite{Gro93c,fgs95,PST01}. For our
analysis we collect the following facts. 

\begin{lemma} \label{lem2a}
  Assume that $\cX $ is a \mz\ family. Then

(i) the spectrum of $T_n$ is contained in the
interval $[A,B]$ for all $n\in \bN $, and 

(ii)  the  solution of the least
squares problem \eqref{eq:8} yields a trigonometric polynomial $p_n =
\sum _{|k| \leq n} a_{n,k} e^{2\pi i kx}
\in \cT _n$ with a coefficient vector $a_n \in \bC ^{2n+1}$ given by
\begin{equation}
  \label{eq:12}
  a_n = T_n \inv U_n ^* y_n \, .
\end{equation}
\end{lemma}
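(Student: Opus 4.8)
The plan is to reduce both statements to elementary linear algebra by encoding the \mz\ inequality as a spectral bound on the quadratic form associated with $T_n$. The one conceptual step is to observe that, for a polynomial $p(x) = \sum_{|l|\le n} c_l e^{2\pi i lx} \in \cT _n$ with coefficient vector $c \in \bC^{2n+1}$, Parseval's identity gives $\|p\|_2^2 = \|c\|^2$, since the trigonometric system is orthonormal on $\bT$. At the same time, because $\tkn^{1/2} p(\xkn) = \sum_{|l|\le n} (U_n)_{kl} c_l = (U_n c)_k$, the sampled sum is exactly the quadratic form of $T_n$:
\begin{equation}
  \sum_{k=1}^{L_n} |p(\xkn)|^2 \tkn = \|U_n c\|^2 = \l T_n c, c\r \, .
\end{equation}
Substituting this into the \mz\ inequality \eqref{eq:1} therefore yields
\begin{equation}
  A\|c\|^2 \le \l T_n c, c\r \le B\|c\|^2 \qquad \text{for all } c \in \bC^{2n+1} \, .
\end{equation}

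For (i), I note that $T_n = U_n^* U_n$ is Hermitian and positive semidefinite, so the Rayleigh quotient $\l T_n c, c\r / \|c\|^2$ sweeps out precisely the interval between its smallest and its largest eigenvalue. The two-sided bound above then confines the whole spectrum to $[A,B]$. In particular, since $A>0$, every eigenvalue is strictly positive and $T_n$ is invertible, which is the fact I will use in (ii).

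For (ii), the same dictionary between polynomials and coefficient vectors turns the least squares functional \eqref{eq:8} into a standard overdetermined linear least squares problem: writing $p = \sum_{|l|\le n} a_l e^{2\pi i lx}$, one has $\tkn^{1/2}\big(f(\xkn) - p(\xkn)\big) = (y_n - U_n a)_k$, so that
\begin{equation}
  \sum_{k=1}^{L_n} |f(\xkn) - p(\xkn)|^2 \tkn = \|y_n - U_n a\|^2 \, .
\end{equation}
Minimizing $\|y_n - U_n a\|^2$ over $a$ is solved by the normal equations $U_n^* U_n a = U_n^* y_n$, i.e.\ $T_n a_n = U_n^* y_n$, and the invertibility of $T_n$ established in (i) produces the unique minimizer $a_n = T_n\inv U_n^* y_n$, as claimed.

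There is no genuine obstacle here; the only point that requires a moment's care is the correct normalization in Parseval, so that $\|p\|_2^2 = \|c\|^2$ with no extraneous factor. This is exactly what makes the constants $A$ and $B$ of \eqref{eq:1} pass directly into the spectral bound without rescaling, so that the \mz\ inequality supplies precisely the invertibility and the conditioning of the Gram matrix $T_n$ needed for the least squares solution.
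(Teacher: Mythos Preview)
Your proof is correct and follows essentially the same route as the paper: identify $\tkn^{1/2} p(\xkn) = (U_n c)_k$ so that the sampled norm equals $\langle T_n c, c\rangle$, use the \mz\ inequality together with $\|p\|_2 = \|c\|$ to pin the spectrum of $T_n$ in $[A,B]$, and then invoke the standard normal-equations/pseudo-inverse formula for the least squares minimizer. Your write-up is slightly more explicit (Parseval, Rayleigh quotient, normal equations) than the paper's, but there is no substantive difference in the argument.
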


\begin{proof}
  (i) Note that for $p\in \cT _n$  and $p(x) = \sum _{|l|\leq n} a_l
  e^{2\pi i l x}$ the point evaluation at $\xkn \in \cX _n$ is
  precisely 
$$
 \tkn ^{1/2} p(\xkn ) = (U_n a )_k    \, ,
$$ and  the sampled $2$-norm is 
$$
\sum _{k=1} ^{L_n} |p(\xkn )|^2 \tau _{n,k}  = \langle U_n a , U_n
a\rangle = \langle T_n a, a \rangle  \, .
$$
By \eqref{eq:1} the spectrum of $T_n$ is contained in the interval
$[A,B]$. 

(ii) This is the standard formula for the solution of a least squares
problem by means of the Moore-Penrose pseudo-inverse $U_n ^\dagger =
(U_n^* U_n)\inv U_n ^* = T_n\inv U_n^*$. 
\end{proof}

Here is  the decisive estimate for the  second term in
\eqref{eq:4}. The following lemma relates the solution to the least
squares problem~\eqref{eq:8} to the best approximation of $f$ in $\cT
_n$. Compare~\cite{Gro99,Gro01b} for an early use of this argument.  

\begin{prop}\label{prop1}
Let $p_n$ be the solution of the least squares problem
\eqref{eq:8}. Then 
\begin{equation}
  \label{eq:13}
  \|P_nf - p_n \|_2^2 \leq A^{-2} B \sum _{k=1}^{L_n} |f(\xkn ) - P_n
  f(\xkn )|^2 \tkn \, .
\end{equation}
  \end{prop}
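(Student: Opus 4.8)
The plan is to transfer everything to coefficient space, where the least squares machinery of Lemma~\ref{lem2a} applies directly. Since $\{e^{2\pi i kx}\}_{|k|\le n}$ is an orthonormal basis of $\cT _n$, Parseval's identity lets me write $P_nf$ and $p_n$ through their coefficient vectors $b_n, a_n \in \bC ^{2n+1}$, so that $\|P_nf - p_n\|_2^2 = \|a_n - b_n\|^2$. This reduces the claim to an estimate on the $\ell^2$-distance of the two coefficient vectors.

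The decisive observation is that the orthogonal projection $P_nf$ is itself the least squares solution for a \emph{consistent} data vector. Concretely, I would set $z_n = U_n b_n$, whose entries are $\tkn ^{1/2} P_nf(\xkn )$ by the point-evaluation identity established in the proof of Lemma~\ref{lem2a}(i) applied to $P_nf \in \cT _n$. Feeding $z_n$ into the least squares formula \eqref{eq:12} and using $T_n = U_n^* U_n$ gives $T_n\inv U_n^* z_n = T_n\inv U_n^* U_n b_n = T_n\inv T_n b_n = b_n$. Hence $b_n = T_n\inv U_n^* z_n$, and since $a_n = T_n\inv U_n^* y_n$, subtraction yields the clean identity
\begin{equation*}
a_n - b_n = T_n\inv U_n^* (y_n - z_n),
\end{equation*}
where $y_n - z_n$ has entries $\tkn ^{1/2}(f(\xkn ) - P_nf(\xkn ))$.

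From here it is a routine operator-norm estimate. Lemma~\ref{lem2a}(i) places the spectrum of $T_n$ in $[A,B]$, so $\|T_n\inv\| \le A^{-1}$, while $\|U_n^*\| = \|U_n\|$ and $\|U_n\|^2 = \|T_n\| \le B$. Combining these with the factored identity,
\begin{equation*}
\|a_n - b_n\|^2 \le \|T_n\inv\|^2 \, \|U_n^*\|^2 \, \|y_n - z_n\|^2 \le A^{-2} B \sum _{k=1}^{L_n} |f(\xkn ) - P_nf(\xkn )|^2 \tkn ,
\end{equation*}
which is exactly \eqref{eq:13} after applying Parseval to the left-hand side.

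The only genuinely conceptual step is the middle one: recognizing that $P_nf$ is reproduced exactly by the least squares solver when fed its own samples $z_n$, so that the difference $a_n - b_n$ depends solely on the sampling discrepancy $y_n - z_n$ and not on $f$ itself. Everything else is bookkeeping with the already-established bounds on $T_n$ and $U_n$. I expect this reproduction identity to be the crux; once it is in place the estimate is purely algebraic, and notably no smoothness of $f$ is invoked at this stage.
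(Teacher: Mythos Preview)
Your proof is correct and follows essentially the same route as the paper: pass to coefficient vectors via Parseval/Plancherel, write $a_n - b_n = T_n^{-1}U_n^*(y_n - z_n)$ using $T_n = U_n^*U_n$, and bound the operator norms by $\|T_n^{-1}\|\le A^{-1}$ and $\|U_n^*\|\le B^{1/2}$. The paper's proof is identical up to notation (it writes $f_n$ for your $b_n$ and manipulates $\|f_n - T_n^{-1}U_n^* y_n\|$ directly rather than naming $z_n$), and your framing of the key identity as ``$P_nf$ is reproduced by the least squares solver on its own samples'' is exactly the content of the algebraic step $T_n^{-1}U_n^*(U_n b_n) = b_n$.
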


  \begin{proof}
 Let $f_n \in \bC ^{2n+1}$ be the Fourier coefficients of the
projection $P_nf$, i.e, $f_n = (\fhat (-n), \fhat (-n+1), \dots, \fhat
(n-1), \fhat (n))$. Then by Plancherel's theorem
$$
\|P_n f - p_n \|^2_2 = \| f_n  - a_n \|^2_2 \, ,
$$
where the norm on the left-hand side is taken in $L^2(\bT)$ and on the
right-hand side in $\bC ^{2n+1}$. Using \eqref{eq:12} for the solution   of
the least squares problem \eqref{eq:8} and $T_n = U_n^* U_n$, we  obtain
\begin{align*}
   \| f_n  - a_n \|^2_2 &= \|f_n - T_n \inv U_N^* y_n\|_2^2 \\
&= \| T_n \inv U_n^* (U_n  f_n - y_n)\|_2^2 \\
& \leq A^{-2} B \|U_n f_n - y_n\|_2^2 \, , 
\end{align*}
      because the operator norm of $T_n\inv $ is bounded by $A\inv $
      and the norm of $U^*_n$ is bounded by $\|U_n ^*\| = \|U_n\|=
      \|U_n^* U_n \|^{1/2} = \|T_n \|^{1/2} \leq B^{1/2}$. 
Finally, 
$$
(U_nf_n)_k = \tkn ^{1/2} \sum _{|l| \leq n} e^{2\pi i 
  \xkn l}  \hat{f}(l) = \tkn ^{1/2} P_n
f(\xkn )  \, ,
$$
and thus
$$
\|U_n f_n - y\|_2^2 = \sum _{k=1} ^{L_n} |P_nf(\xkn ) - f(\xkn )|^2
\tkn  \, ,
$$ 
and the statement is proved. 
  \end{proof}

\subsection{Proof of Theorem~\ref{tm1}}
\begin{proof}
(i) We use  the orthogonal  decomposition  $
\|f- p_n \|_2^2 =   \|f- P_nf \|_2^2 +  \|P_nf - p_n \|_2^2 \, .
$
Then  Lemma \ref{lm1}(ii) yields 
$$
  \|f- P_nf \|_2^2 \leq \|f- P_nf \|_\infty^2 \leq  \,\|f\|_{\hs } ^2 \,
\phi _\sigma (n)^2
  \, ,
$$
and Proposition~\ref{prop1} yields 
$$
  \|P_nf - p_n \|_2^2 \leq A^{-2} B \sum _{k=1}^{L_n} |f(\xkn ) - P_n
  f(\xkn )|^2 \tkn \, .
$$
We now apply \eqref{eq:7} and Lemma~\ref{lm1}(ii) to $f-P_nf \in \hs
(\bT )$ and
continue the inequality as 
$$
\sum _{k=1}^{L_n} |f(\xkn ) - P_n
  f(\xkn )|^2 \tkn  \leq B \|f-P_nf\|_{\infty} ^2 \leq B \|f\|_{\hs }
  ^2 \phi _\sigma (n)^2 \, .
$$
The combination of these inequalities yields the final error estimate
\begin{equation}
  \label{eq:14}
  \|f-p_n\|_2^2 \leq     \big( 1 +
  \frac{B^2}{A^2} \big) \|f\|_{\hs } ^2 \, \phi _\sigma (n) ^2 \, .
\end{equation}
Since $\phi _\sigma (n)  = \cO (n^{-\sigma +1/2})$, Theorem~\ref{tm1} is
proved. 

(ii) If $f$ can be extended to an analytic function of the strip $\{z\in \bC
: |\mathrm{Im} z| <  \rho  _0 \}$, then its Fourier coefficients decay
exponentially as $|\fhat (k) | \leq c_\rho e^{-\rho |k|}$ for every
$\rho <\rho _0$ with an appropriate constant. Consequently
\begin{equation}
  \label{eq:abc}
\|f-P_nf\|_\infty \leq \sum _{|k|>n} |\fhat (k)| \leq  c_\rho \sum
_{|k|>n} e^{-\rho |k|} = \frac{2c_\rho}{e^{\rho}-1} e^{-\rho n} \, ,
\end{equation}
which proves the exponential decay. 
\end{proof}

\subsection{Quadrature Rules}
To deduce a set of quadrature rules, we use frame theory to obtain
suitable weights. See~\cite{Chr16,DS52} for the basic facts. The
following argument is typical in sampling theory, whereas often the
derivation of quadrature rules relies on abstract functional analytic
arguments as in~\cite{MNW01}.

Let $k^{(n)}_x\in \cT_n$ be the reproducing kernel of $\cT _n$ defined by
$p(x) = \langle p, k^{(n)}_x \rangle $ for all $\pi \in \cT _n$ and $x\in
\bT $. In fact, $k^{(n)}_x(y) = \frac{\sin (2n+1)\pi (y-x)}{\sin \pi (y-x)}$ is just the Dirichlet kernel for $\cT
_n$. In the language of frame theory each inequality of \eqref{eq:1}
simply says that $\{\tkn ^{1/2} \, k^{(n)}_{\xkn
} : k=1, \dots , L_n\}$ is a frame for $\cT _n$ with  frame
bounds $A,B>0$ independent of $n$. Equivalently, the   associated frame
operator
$S_np = \sum _{k=1}^{L_n} \tkn  \langle p, k^{(n)}_{\xkn } \rangle k^{(n)}_{\xkn }
$
is invertible on $\cT _n$ for every $n\in \bN $ and we obtain the dual
frame $e_{n,k} =  S_n \inv ( \tkn ^{1/2}  k^{(n)}_{\xkn }) $. The factorization
$S_n\inv S_n = \mathrm{I}_{\cT _n}$ yields the following
reconstruction formula for all  trigonometric polynomials $p \in \cT
_n$ from their samples:    
\begin{equation}
  \label{eq:17}
  p = \sum _{k=1}^{L_n} \tkn   \langle p , k^{(n)}_{\xkn } \rangle
  S\inv k^{(n)}_{\xkn } = \sum _{k=1}^{L_n} \tkn ^{1/2} p(\xkn ) \,
  e_{n,k} \,  
\end{equation}
Furthermore,  $\{e_{n,k} : k=1, \dots, L_n\}$ is a frame for $\cT _n$
with frame bounds $B\inv $ and $ A\inv $, again independent of $n$.
This property implies in particular that for the constant 
function $1$ with $\|1\|_2 = 1$ we have
\begin{equation}
  \label{eq:a1}
  \sum _{k=1} ^{L_n} |\langle  1, e_{n,k}  \rangle |^2 \leq A\inv \|1\|_2
  = A\inv \, .
\end{equation}
We now define the weights for the quadrature rules by 
\begin{equation}
  \label{eq:18}
  w_{n,k} = \tkn ^{1/2} \langle  e_{n,k} ,1 \rangle =  \tkn ^{1/2} \int
  _{-1/2} ^{1/2} e_{n,k}(x) \, dx \, , 
\end{equation}
and the corresponding quadrature rule by 
\begin{equation} \label{eq:18b}
  I_n(f) = \sum _{k=1}^{L_n} f(\xkn ) w_{n,k} \, .
\end{equation}
We also  write $I(f) = \int _{-1/2} ^{1/2} f(x) \, dx $ for the
integral of $f$ on $\bT $. 

As a consequence of the definitions we obtain the following easy
properties of this quadrature rule.
\begin{lemma} \label{lem-quad}
  Let  $w_{n,k}$ and $I_n$ be  defined as in \eqref{eq:18} and
  \eqref{eq:18b}.

  (i) Then the quadrature rule $I_n$ is exact on $\cT _n$, i.e.,
  $I_n(p) = I(p)$ for all $p \in \cT _n$.

  (ii) For $f\in \hs (\bT )$  we have
  \begin{equation}
    \label{eq:a2}
    |I_n(f)|^2 \leq A\inv \sum _{k=1}^{L_n} |f(\xkn )|^2 \tkn  \leq
    \tfrac{B}{A}\|f\|_\infty ^2 \, .
  \end{equation}
\end{lemma}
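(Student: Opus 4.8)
The plan is to read off both parts directly from the frame-theoretic machinery already in place, namely the reconstruction formula \eqref{eq:17} and the dual-frame bound \eqref{eq:a1}; no new estimates are required. For part (i), the decisive observation is that integrating against the constant function is the same as pairing with $1\in \cT _n$, since $I(p) = \int _{-1/2}^{1/2} p(x)\,dx = \l p,1\r$. I would therefore take the reconstruction formula $p = \sum _{k=1}^{L_n} \tkn ^{1/2} p(\xkn )\, e_{n,k}$ from \eqref{eq:17} and pair it with $1$, obtaining
\begin{equation*}
 I(p) = \l p,1\r = \sum _{k=1}^{L_n} \tkn ^{1/2} p(\xkn )\, \l e_{n,k},1\r = \sum _{k=1}^{L_n} p(\xkn )\, w_{n,k} = I_n(p) \, ,
\end{equation*}
where the third equality is merely the definition \eqref{eq:18} of $w_{n,k}$. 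This is exactness on $\cT _n$.

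For part (ii), I would start from $I_n(f) = \sum _{k=1}^{L_n} \bigl(\tkn ^{1/2} f(\xkn )\bigr)\, \l e_{n,k},1\r$ and apply the Cauchy-Schwarz inequality to peel off the sampled norm:
\begin{equation*}
 |I_n(f)|^2 \leq \Big( \sum _{k=1}^{L_n} |f(\xkn )|^2 \tkn \Big) \Big( \sum _{k=1}^{L_n} |\l e_{n,k},1\r |^2 \Big) \, .
\end{equation*}
The second factor is bounded by the frame bound \eqref{eq:a1} for the dual frame $\{e_{n,k}\}$, which gives $\sum _{k=1}^{L_n} |\l e_{n,k},1\r |^2 \leq A\inv$; this produces the first inequality in \eqref{eq:a2}. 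The second inequality in \eqref{eq:a2} is then immediate from Lemma~\ref{lm1}(iii), specifically the sampling bound $\sum _{k=1}^{L_n} |f(\xkn )|^2 \tkn \leq B\|f\|_\infty ^2$ recorded in \eqref{eq:6}, after multiplying through by $A\inv $.

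The argument is entirely routine and I anticipate no genuine obstacle. The only point that calls for a moment of care is the bookkeeping of the frame constants: one must invoke the \emph{upper} bound $A\inv$ for the dual frame rather than its lower bound $B\inv$, which is exactly what \eqref{eq:a1} supplies, and one must keep in mind that the constant function $1$ indeed lies in $\cT _n$ (it is the $k=0$ term), so that the dual-frame inequality applies to it.
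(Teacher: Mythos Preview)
Your proof is correct and follows exactly the paper's approach: part (i) is deduced from the reconstruction formula \eqref{eq:17} by pairing with $1$, and part (ii) is Cauchy--Schwarz combined with the dual-frame bound \eqref{eq:a1} and the sampling estimate \eqref{eq:6}. Your write-up simply makes explicit the steps that the paper compresses into a single line.
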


\begin{proof}
  (i) follows from \eqref{eq:17}. For (ii) we use
  \begin{equation*}
    |I_n(f)|^2 \leq \big( \sum _{k=1}^{L_n} |f(\xkn )|^2 \tkn \Big) \,
\Big(  \sum _{k=1}^{L_n} |\langle  e_{n,k} , 1 \rangle )|^2  \Big)
\leq \frac{B}{A} \|f\|_\infty ^2 \, .
  \end{equation*}
\end{proof}

As a consequence of Theorem~\ref{tm1} we obtain the following
convergence theorem. 

\begin{tm}
  \label{tm2}
Let $\cX $ be a \mz\ family with weights $\tau $ and let $\{I_n: n \in \bN \}$ be the
associated sequence of  quadrature rules. 

(i) If $f\in C(\bT )$, then 
\begin{equation}
  \label{eq:190}
  |I(f)  - I_n (f)| \leq (1+\sqrt{\kappa} )  \inf _{p \in \cT _n}
  \|f-p\|_\infty \, .  
\end{equation}
Consequently, if $f\in C^\sigma (\bT )$, then $  |I(f)  - I_n (f)| =
\cO (n^{-\sigma })$.

(ii) If $f\in \hs $ for $\sigma >1/2$, then 
\begin{equation}
  \label{eq:19}
  |I(f)  - I_n (f)| \leq (1+ \sqrt{\kappa })   \|f\|_{\hs } \phi
  _\sigma (n) \, ,
\end{equation}
with $\phi _\sigma (n) = (\sigma -1/2)^{-1/2} n^{-\sigma +1/2}$.

(iii) If $f$ extends to an analytic function on a strip  $\{z\in \bC
: |\mathrm{Im}\, z| <  \rho _0 \}$, then  for $\rho <\rho _0$
$$
|I(f) - I_n (f)| = \cO (e^{-\rho n}) \, .
$$
\end{tm}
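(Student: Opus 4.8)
The plan is to use the classical ``exactness plus stability'' scheme: $I_n$ is exact on $\cT_n$ by Lemma~\ref{lem-quad}(i), so comparing $f$ with any polynomial $p\in\cT_n$ will control the quadrature error by the uniform approximation error $\|f-p\|_\infty$. The one preliminary I need is a stability bound valid on all of $C(\bT)$. I observe that the estimate of Lemma~\ref{lem-quad}(ii) does not really require $f\in\hs$: its intermediate bound $\sum_k |g(\xkn)|^2\tkn \leq B\|g\|_\infty^2$ uses only $\sum_k \tkn \leq B$ from \eqref{eq:7}, which came from testing the \mz\ inequality on the constant $p\equiv 1$. Running the Cauchy--Schwarz step of that proof together with \eqref{eq:a1} therefore gives
\begin{equation*}
|I_n(g)| \leq \sqrt{\kappa}\,\|g\|_\infty \qquad \text{for every } g\in C(\bT).
\end{equation*}

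For part~(i) I would fix $p\in\cT_n$ and use exactness to write $I(f)-I_n(f)=I(f-p)-I_n(f-p)$. Since the torus has total mass $1$, we have $|I(f-p)|\leq\|f-p\|_\infty$, and the stability bound gives $|I_n(f-p)|\leq\sqrt{\kappa}\,\|f-p\|_\infty$; adding these and taking the infimum over $p\in\cT_n$ yields \eqref{eq:190}. The $C^\sigma$ consequence is then immediate from the classical Jackson theorem, which furnishes $\inf_{p\in\cT_n}\|f-p\|_\infty=\cO(n^{-\sigma})$.

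For parts~(ii) and~(iii) I would not optimize over $p$ but simply insert the explicit choice $p=P_nf$, the $n$-th Fourier partial sum, into the bound just obtained, giving $|I(f)-I_n(f)|\leq(1+\sqrt{\kappa})\,\|f-P_nf\|_\infty$. Both statements then reduce to estimates on $\|f-P_nf\|_\infty$ already in hand: Lemma~\ref{lm1}(ii) gives $\|f-P_nf\|_\infty\leq\|f\|_{\hs}\,\phi_\sigma(n)$, which is exactly \eqref{eq:19}, while the analyticity estimate \eqref{eq:abc} from the proof of Theorem~\ref{tm1}(ii) gives $\|f-P_nf\|_\infty=\cO(e^{-\rho n})$ for each $\rho<\rho_0$.

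I do not expect a genuine obstacle here: the only point needing attention is the extension of the stability bound from $\hs$ to all of $C(\bT)$, after which each part is a two-line triangle-inequality argument feeding on earlier results. This matches the paper's guiding thesis that, once a \mz\ family and its dual frame are available, the quadrature convergence theorem ``comes for free.''
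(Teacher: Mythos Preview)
Your proposal is correct and follows essentially the same argument as the paper: split $I(f)-I_n(f)=I(f-p)-I_n(f-p)$ by exactness, bound $|I_n(f-p)|\leq\sqrt{\kappa}\,\|f-p\|_\infty$ via Lemma~\ref{lem-quad}(ii), and then specialize $p$ to the best uniform approximant for~(i) and to $P_nf$ for~(ii) and~(iii). Your explicit remark that the stability bound of Lemma~\ref{lem-quad}(ii) only needs $g\in C(\bT)$ (not $g\in\hs$) is well taken---the paper's proof of part~(i) in fact uses exactly this extension when it applies \eqref{eq:a2} to $q_n-f$ with $f$ merely continuous.
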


\begin{proof}
(i) and (ii)  Let  $P_nf$ is the orthogonal projection of $f$
onto $\cT _N$ and  $q_n$ be the best approximation of $f$ in $\cT _n$  with respect
to $\| \cdot \|_\infty $. 
  Since   $I_n$ is exact on
  $\cT _n$, we  have $I(P_nf)  = I_n (P_nf)$ and $I(q_n) =
  I_n(q_n)$. Then we obtain  with \eqref{eq:a2} that 
  \begin{align*}
    |I(f) - I_n(f)| &\leq |I(f-q_n)| + |I_n(q_n - f)| \\
                    &\leq \|f-q_n\|_\infty +   (B/A)^{1/2} \|f-q_n\|_\infty \\
                    & =  (1+\sqrt{\kappa } ) \inf _{p\in \cT _n}
                      \|f-p\|_\infty \, .
  \end{align*}
For the approximating polynomial $P_nf$ we obtain 
  \begin{align*}
    |I(f) - I_n(f)| &\leq |I(f-P_nf)| + |I_n(P_nf - f)| \\
                    &\leq \|f-P_nf\|_\infty +   (B/A)^{1/2} \|f-P_nf\|_\infty \\
    &\leq (1+\sqrt{\kappa } ) \,    \|f\|_{\hs } \, \phi _\sigma (n)
      \, ,   \end{align*}
    with $\phi _\sigma (n) = (\sigma -1/2)^{-1/2} n^{-\sigma +1/2}$
  by Lemma~\ref{lm1}.
  
(iii) If $f$ can be extended to the strip $\{z\in \bC
: |\mathrm{Im} z| <  \rho  _0 \}$, then we use the error estimate
\eqref{eq:abc} for $\|f-P_nf\|_\infty$ and obtain 
$$
  |I(f) - I_n(f)| \leq (1+ (B/A)^{1/2}) \|f-P_nf\|_\infty \leq c_\rho '
  (1+\sqrt{\kappa }) \, e^{-\rho n}
$$
for every $\rho <\rho _0$ with a constant $c_\rho '$ depending on $\rho
$. 
\end{proof}

Theorem~\ref{tm2} answers a question of N.\ Trefethen~\cite{AT17}
about the rate of convergence of the standard quadrature rules for
scattered nodes.

\section{General Approximation Theorems}
 Theorem~\ref{tm1} and the quadrature rule of Theorem~\ref{tm2}
required hardly any tools, and the proofs  use  only the definitions of  \mz\
inequalities, Sobolev spaces, and the solution formula for least square
problems. We will now show that the results for $\bT $  can be
extended   significantly  with a mere change of
notation.

For an axiomatic approach to approximation theorems from samples and
quadrature rules, we assume that $M$ is a 
compact space and $\mu $ is a probability measure on $M$. Furthermore, 

(i)  $\{\phi _k : k\in \bN\}$ is an orthonormal basis for $L^2(M,\mu
)$.  In agreement with the notation for Fourier series, we write $\fhat (k)
= \langle f, \phi _k\rangle =
\int _M f(x) \overline{\phi _k(x)} \, d\mu (x)$ for the $k$-th
coefficient, so that the orthogonal expansion is $f= \sum _k \fhat (k)  \phi _k$. 

(ii) Next, let  $\lambda _k \geq 0$ be  a non-decreasing  sequence  with
$\lim _{k\to \infty } \lambda _k = \infty $. 
The associated Sobolev
space $\hs (M)$ is defined by
\begin{equation}
  \label{eq:20}
  \hs (M) = \{ f\in L^2(M): \|f\|_{\hs } ^2 = \sum _{k=1}^\infty
  |\fhat (k)|^2 (1+\lambda _k ^2)^{\sigma } < \infty \} \, . 
\end{equation}
With $\cP _n$ we denote the set of ``polynomials'' of degree $n$ on
$M$ by 
\begin{equation}
  \label{eq:21}
  \cP _n = \{ p \in L^2(M): p = \sum _{k: \lambda _k \leq n} \fhat (k)
  \, \phi _k \} \, .
\end{equation}
 This space is  finite-dimensional because $\lim _{k\to \infty }
 \lambda _k = \infty $.  The definition of $\cP _n$ encapsulates the
 appropriate  notion of bandlimitedness with respect to the basis
 $\{ \phi _n \}$; in the  terminology of \cite{FM10}  the functions 
 in $\cP_n $ are called  diffusion polynomials.

 Note that both $\hs (M)$ and $\cP _n$ depend on the orthonormal basis
 and on the sequence $\{\lambda _k\}$. We may think of $\{\phi _k\}$
 as the set of eigenfunctions of an unbounded positive operator on
 $L^2(M,\mu )$ with eigenvalues $ \lambda _k$.  

 The following table illustrates the transition from the set-up in
Section~2 to the general theory.


\begin{table}[h!]
  \begin{center}
    \caption{Generalization}
    \label{tab:table1}
    \begin{tabular}{l|l} 
      \emph{from torus $\bT$} & \emph{to manifold $M$} \\
      \hline
      ONB $e^{2\pi i k x}$ for $L^2(\bT )$ &   ONB $\phi _k (x) $ for  $L^2(M,\mu )$ \\
    Fourier coefficients $\fhat (k) $ & Fourier coefficients~$ \langle f, \phi _k \rangle $ \\
    Eigenvalues $ k^2$ of $-\frac{1}{4\pi ^2}\tfrac{d^2}{dx^2} $ &  ``Eigenvalues'' $\lambda _k \geq 0 $, $\lambda _k \to \infty $ \\
    Sobolev space $\hs (\bT)$ &   Sobolev space $\hs (M) $  \\
    Trigonometric polynomials $\cT _n$  &   ``Polynomials'' $\cP _n
   $, $p = \sum _{k: \lambda _k \leq n } \fhat (k) \, \phi _k  $
     \end{tabular}
  \end{center}
\end{table}

 For meaningful statements we make the following natural assumptions:

 (i)  Every basis element $\phi _k$ is continuous (and thus bounded)
 on $M$ and $\phi
_1 \equiv 1$. Then the point evaluation $p \to p(x)$ makes sense on
$\cP _n$ and $\int _M f d\mu = \langle f, \phi _1\rangle$. 

(ii) There exists  a critical index
$\sigma _{\mathrm{crit}}$  such that for $\sigma > \scrit $ the sum
\begin{equation}
  \label{eq:a3}
C_\sigma ^2 = \sup _{x\in M}  \sum _{k=1}^\infty |\phi _k(x)|^2
(1+\lambda _k ^2)^{-\sigma } <\infty
  \, 
\end{equation}
converges\footnote{In  \cite{BCCG14} the expression  $\sum _{k=1}^\infty \phi _k(x) \phi _k(y)
(1+\lambda _k ^2)^{-\sigma }$ is called the Bessel kernel associated
to the orthonormal basis $\{\phi _k\}$.}.

(iii) The error estimates will be in terms of the remainder function 
\begin{equation}
  \label{eq:26}
  \phi _\sigma  (n) = \sup _{x\in M} \Big(\sum _{k: \lambda _k > n} |\phi _k(x)|^2 (1+ \lambda _k ^2)^{-\sigma
} \Big)^{1/2}  \, .
\end{equation}
By \eqref{eq:a3}  $\phi _\sigma (n) \to 0$ as $n\to \infty $.  

A \mz\ family  $\cX $ for $M$  is  a doubly-indexed set $\cX = \{ \xkn
: n\in \bN , k=1 , \dots , L_n\} \subseteq
M$ with associated weights $\{ \tkn \}$, such that 
\begin{equation}
  \label{eq:1a}
A \|p\|_2^2 \leq \sum _{k=1} ^{L_n} |p(\xkn )|^2 \tau _{n,k} \leq B
\|p \|_2^2 \qquad \text{ for all } p \in \cP _n \, ,
\end{equation}
with constants $A,B >0$ independent of $n$.

\subsection{Embeddings}
We first prove the versions for approximation and embedding in the
general context.

\begin{lemma}
  \label{sobgen}
  (i) If $\sigma > \scrit $, then $\hs (M) \subseteq C(M)$, in fact,
  $\| f \|_\infty \leq C_\sigma \, \|f\|_{\hs }$.  

  (ii) 
For $f\in C(M)$ 
  \begin{equation}
    \label{eq:a4}
    \|f-P_nf \|_2 \leq \|f-P_nf\|_\infty \leq \|f\|_{\hs } \phi
    _\sigma (n) \, .
  \end{equation}

(iii) Assume that $\cX $ is a \mz\ family with weights $\tau $.  For $f\in \hs (M) $ we have 
\begin{equation}
  \label{eq:6a}
  \sum _{k=1}^{L_n} |f(\xkn )|^2 \tkn \leq B C_\sigma ^2 \|f\|_{\hs }^2 \, .
\end{equation}
\end{lemma}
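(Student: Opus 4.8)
The plan is to prove all three parts by one and the same Cauchy--Schwarz splitting, exploiting that the quantities $C_\sigma$ and $\phi_\sigma(n)$ from \eqref{eq:a3} and \eqref{eq:26} are tailored precisely for this. The underlying mechanism is to pair the weight $(1+\lambda_k^2)^{\sigma/2}$ with $\fhat(k)$ and its reciprocal with $\phi_k(x)$, so that one factor becomes the $\hs$-norm and the other becomes the spectral sum that we have agreed to call finite.

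For (i) I would expand $f=\sum_k \fhat(k)\phi_k$ and, at a fixed $x\in M$, write
$$
f(x)=\sum_k \big(\fhat(k)(1+\lambda_k^2)^{\sigma/2}\big)\,\big(\phi_k(x)(1+\lambda_k^2)^{-\sigma/2}\big).
$$
Cauchy--Schwarz then separates $\|f\|_{\hs}$ from the sum in \eqref{eq:a3}, giving $|f(x)|\le C_\sigma\|f\|_{\hs}$ for every $x$, hence $\|f\|_\infty\le C_\sigma\|f\|_{\hs}$. To obtain genuine continuity rather than mere boundedness, I would apply the identical estimate to the tails $\sum_{k>N}$, which tend to $0$ by the convergence of \eqref{eq:a3}; this shows the partial sums $\sum_{k\le N}\fhat(k)\phi_k$ converge uniformly, so $f$ is a uniform limit of continuous functions and is therefore continuous.

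For (ii) the left inequality $\|f-P_nf\|_2\le\|f-P_nf\|_\infty$ is immediate because $\mu$ is a probability measure. For the right inequality I would repeat the split of (i) but restrict the summation to the indices $\{k:\lambda_k>n\}$, since $f-P_nf=\sum_{\lambda_k>n}\fhat(k)\phi_k$. The first Cauchy--Schwarz factor is bounded by $\|f\|_{\hs}$ and the second is exactly the quantity $\phi_\sigma(n)$ defined in \eqref{eq:26}; taking the supremum over $x\in M$ finishes it. The bound is vacuous unless $f\in\hs$, which by (i) is automatically continuous, so there is no tension with the hypothesis $f\in C(M)$.

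For (iii) I would first feed the constant function $p\equiv 1=\phi_1$, which lies in $\cP_n$ because $\phi_1\equiv 1$ carries the bottom eigenvalue $\lambda_1$, into the \mz\ inequality \eqref{eq:1a}. Since $\|1\|_2=1$ this yields $A\le\sum_{k}\tkn\le B$, the exact analogue of \eqref{eq:7}. Then I estimate crudely,
$$
\sum_{k=1}^{L_n}|f(\xkn)|^2\tkn\le\|f\|_\infty^2\sum_{k=1}^{L_n}\tkn\le B\|f\|_\infty^2\le BC_\sigma^2\|f\|_{\hs}^2,
$$
where the last step invokes part (i). None of these steps presents a genuine obstacle; the only point requiring care is confirming that $1\in\cP_n$, i.e.\ that the constant eigenfunction $\phi_1$ really sits at the bottom of the spectrum, so that the \mz\ inequality may legitimately be applied to it.
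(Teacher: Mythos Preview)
Your proposal is correct and follows essentially the same approach as the paper: the same Cauchy--Schwarz splitting for (i) and (ii), and the same application of the \mz\ inequality to the constant function $p\equiv 1$ followed by the Sobolev embedding for (iii). Your added remarks on genuine continuity via uniform convergence of the tails and on verifying $1\in\cP_n$ are careful touches that the paper leaves implicit, but the argument is otherwise the same.
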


\begin{proof}
(i) and (ii): 
Let $f = \sum _{k=1}^\infty \fhat (k) \phi _k$. Then with the
Cauchy-Schwarz inequality, 
\begin{align*}
|f(x) | \leq \Big(\sum _{k=1}^\infty |\fhat (k)|^2 (1+\lambda
          _k ^2)^\sigma   \Big)^{1/2} \, \sup _{x\in M} \Big(\sum
          _{k=1}^\infty |\phi _k(x)|^2 (1+\lambda
          _k ^2)^{-\sigma }  \Big)^{1/2}  =  C_\sigma \|f\|_{\hs } \, .
\end{align*}
The approximation error is estimated by 
\begin{align}
    \|f-P_nf\|_2 & \leq   \|f - P_n f \|_\infty \notag \\
  &\leq \|f \|_{\hs }  \sup _{x\in M}  \Big(\sum _{k: \lambda _k > n} |\phi _k(x)|^2 (1+ \lambda _k )^{-\sigma
} \Big)^{1/2} = \|f\|_{\hs } \, \phi _\sigma (n) \, .  \label{eq:25}
\end{align}
In the first inequality we have used the fact that $\mu $ is a
probability measure. 

(iii) is proved  as in Lemma~\ref{lm1}.  The sampling inequalities
\eqref{eq:1} applied to the constant function $p\equiv 1$  with
$\|p\|_2=1$ yields
\begin{equation}
  \label{eq:7a}
  A \leq \sum _{k=1}^{L_n} \tkn \leq B \, .
\end{equation}
Consequently, with the embedding of (i), we have 
$$
\sum _{k=1}^{L_n} |f(\xkn )|^2 \tkn \leq  \|f\|_\infty ^2 \sum
_{k=1}^{L_n}  \tkn \leq B C_\sigma ^2 \|f\|_{\hs }^2 \, .
$$ 
\end{proof}

\subsection{Quasi-interpolation versus projection}

 To produce optimal
approximations of $f$ in $\cP _n$ from the samples $\cX _n= \{ \xkn : k=1 , \dots
, L_n\} $ in the $n$-th layer of $\cX $, we solve the  sequence of least
squares problems
\begin{equation}
  \label{eq:8b}
p_n = \mathrm{argmin} _{p\in \cT _n} \sum _{k=1} ^{L_n} |f(\xkn ) -
p(\xkn )|^2 \tkn \, .
\end{equation}
Now let $y_n = (\tau _{n,1} ^{1/2} f(x_{n,1}), \dots , \tau
_{n,L_n}^{1/2} f(x_{n,L_n})) \in \bC ^{L_n}$ be the
given data vector, and $U_n $ be the $L_n \times \mathrm{dim}\, \cP _n$-matrix  with
entries 
\begin{equation}
  \label{eq:10a}
(U_n)_{kl} = \tkn ^{1/2} \phi _l (x_{n,k})  \qquad k=1,\dots , L_n,
 l = 1, \dots , \mathrm{dim}\, \cP _n
\, , 
\end{equation}
and set $T_n = U_n^* U_n $. With this notation the solution to
\eqref{eq:8b} is the polynomial   $p_n =
\sum _{k: \lambda _k  \leq n} a_{n,k} \phi _k
\in \cP _n$  with  coefficient vector 
\begin{equation}
  \label{eq:12b}
  a_n = T_n \inv U_n ^* y_n  \, .
\end{equation}
Again, since $\langle T_nc,c\rangle  = \langle U_nc, U_nc  \rangle =
\sum _{k=1}^{L_n} |p(\xkn )|^2 \tkn  $,  the spectrum of
every $T_n$ is contained in the interval $[A,B]$ and we have uniform
 upper bounds for the norms of $T_n$ and $T_n\inv $. 

We now have the analogue of Proposition~\ref{prop1}.

\begin{prop} \label{prop1gen}
Let $\cX $ be a \mz\ family with weights $\tau $.   Let $p_n$ be the solution of the least squares problem
\eqref{eq:8b}. Then 
\begin{align}
  \label{eq:13a}
  \|P_nf - p_n \|_2^2 &\leq A^{-2} B \sum _{k=1}^{L_n} |f(\xkn ) - P_n
                        f(\xkn )|^2 \tkn  \\
  & \leq \tfrac{B^2}{A^2} \|f-P_nf\|^2_\infty \leq
  \kappa ^2  \|f\|^2_{\hs } \phi _\sigma (n)^2 \, . \notag 
\end{align}
\end{prop}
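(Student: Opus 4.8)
The plan is to establish the two inequalities in Proposition~\ref{prop1gen} separately, treating the first one as a direct transcription of the argument behind Proposition~\ref{prop1} and the second as an application of the embedding lemmas already in hand. For the first inequality, I would repeat verbatim the least-squares computation from the proof of Proposition~\ref{prop1}, now using the general Vandermonde-type matrix $U_n$ from \eqref{eq:10a} and the coefficient formula \eqref{eq:12b}. Writing $f_n$ for the coefficient vector of the orthogonal projection $P_nf$ in the basis $\{\phi _k : \lambda _k \leq n\}$, Parseval gives $\|P_nf - p_n\|_2^2 = \|f_n - a_n\|_2^2$ because $\{\phi _k\}$ is an orthonormal basis. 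Substituting $a_n = T_n\inv U_n^* y_n$ and $T_n = U_n^*U_n$ produces
\[
\|f_n - a_n\|_2^2 = \|T_n\inv U_n^*(U_n f_n - y_n)\|_2^2 \leq \|T_n\inv\|^2 \, \|U_n^*\|^2 \, \|U_n f_n - y_n\|_2^2.
\]
The operator-norm bounds $\|T_n\inv\| \leq A\inv$ and $\|U_n^*\|^2 = \|T_n\| \leq B$ come directly from the fact that the spectrum of $T_n$ lies in $[A,B]$, which was already noted just before the proposition. Finally $(U_n f_n)_k = \tkn^{1/2} P_nf(\xkn)$ by the definition of $U_n$, so $\|U_n f_n - y_n\|_2^2 = \sum_k |P_nf(\xkn) - f(\xkn)|^2 \tkn$, giving the first inequality with the constant $A^{-2}B$.

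For the second chain of inequalities I would apply the sampling bound of Lemma~\ref{sobgen}(iii), but to the function $f - P_nf$ rather than to $f$. The middle step uses \eqref{eq:7a} (the bound $\sum_k \tkn \leq B$ applied to the constant polynomial) to pass from the sampled sum to the sup-norm:
\[
\sum_{k=1}^{L_n} |f(\xkn) - P_nf(\xkn)|^2 \tkn \leq B\,\|f - P_nf\|_\infty^2,
\]
so that combined with the factor $A^{-2}B$ this yields $\tfrac{B^2}{A^2}\|f - P_nf\|_\infty^2$. The last step is precisely the approximation estimate \eqref{eq:a4} of Lemma~\ref{sobgen}(ii), namely $\|f - P_nf\|_\infty \leq \|f\|_{\hs}\,\phi_\sigma(n)$, together with the identification $\kappa = B/A$.

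The only point requiring care — rather than a genuine obstacle — is ensuring that the operator-norm manipulations are justified even though $U_n$ is generally a rectangular $L_n \times \dima$ matrix with $L_n \geq \dima$. Here $T_n = U_n^*U_n$ is the square Gram matrix acting on coefficient space $\bC^{\dima}$, and the \mz\ inequality \eqref{eq:1a} guarantees that $T_n$ is positive definite with spectrum in $[A,B]$, so $T_n\inv$ is well defined and $\|T_n\inv\| \leq A\inv$. The identity $\|U_n^*\|^2 = \|U_n\|^2 = \|U_n^*U_n\| = \|T_n\|$ is the standard relation between an operator and its adjoint. Since the proof is an exact copy of the torus case with $e^{2\pi i kx}$ replaced by $\phi_k$ and $2n+1$ replaced by $\dima$, I expect no difficulty; the substance of the argument was already carried out in Proposition~\ref{prop1}, and this proposition simply records that the same reasoning survives the change of notation summarized in the generalization table.
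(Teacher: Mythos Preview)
Your proposal is correct and follows exactly the approach the paper intends: the paper's own proof is the single sentence ``The proof is identical to the proof of Proposition~\ref{prop1}, this time combined with Lemma~\ref{sobgen},'' and you have faithfully unpacked both ingredients --- the least-squares computation with $T_n\inv U_n^*$ for the first inequality, and the weight bound \eqref{eq:7a} together with \eqref{eq:a4} for the second chain.
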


The proof is identical to the proof of Proposition~\ref{prop1}, this
time combined with Lemma~\ref{sobgen}.

\subsection{Approximation of continuous functions from samples}

In this general context the  analog of Theorem~\ref{tm1} read as
follows. 

\begin{tm}
  \label{tm1a}
Assume that $\cX = \{\cX _n :n\in \bN \}$ is a \mz\ family for $M$ with
condition number $\kappa = B/A$ and associated weights $\{\tkn \}$. 

   If $f\in H^\sigma (M)$, then 
  \begin{equation}
    \label{eq:3b}
    \|f-p_n\|_2  \leq \sqrt{1+\kappa ^2} \|f\|_{\hs } \phi _\sigma (n) \, .
  \end{equation}
\end{tm}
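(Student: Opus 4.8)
The plan is to combine the orthogonal decomposition in \eqref{eq:4} with the two error estimates already established, exactly mirroring the proof of Theorem~\ref{tm1}(i) but replacing the torus-specific tools by their general analogues. Specifically, I would start from
\begin{equation*}
  \|f-p_n\|_2^2 = \|f-P_nf\|_2^2 + \|P_nf - p_n\|_2^2 \, ,
\end{equation*}
which holds because $p_n \in \cP_n$ and $P_nf$ is the orthogonal projection of $f$ onto $\cP_n$, so the difference $f - P_nf$ is orthogonal to $\cP_n$ and in particular to $P_nf - p_n$.

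Next I would bound each of the two terms separately. For the first term I would invoke Lemma~\ref{sobgen}(ii), which gives $\|f-P_nf\|_2 \leq \|f\|_{\hs}\,\phi_\sigma(n)$, hence $\|f-P_nf\|_2^2 \leq \|f\|_{\hs}^2 \phi_\sigma(n)^2$. For the second term I would apply Proposition~\ref{prop1gen}, whose chained estimate already yields $\|P_nf - p_n\|_2^2 \leq \kappa^2 \|f\|_{\hs}^2 \phi_\sigma(n)^2$. Adding the two bounds gives
\begin{equation*}
  \|f-p_n\|_2^2 \leq (1+\kappa^2)\, \|f\|_{\hs}^2\, \phi_\sigma(n)^2 \, ,
\end{equation*}
and taking square roots produces \eqref{eq:3b}.

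I do not expect a genuine obstacle here, since all the hard analytic work has been front-loaded into Lemma~\ref{sobgen} and Proposition~\ref{prop1gen}; the only points requiring a moment's care are that the Pythagorean decomposition is legitimate (the membership $p_n \in \cP_n$ and the definition of $P_nf$ as the orthogonal projection must be invoked) and that the implicit hypothesis $\sigma > \scrit$ is in force, so that $\phi_\sigma(n)$ is finite and the embedding of Lemma~\ref{sobgen}(i) applies. It is worth remarking that, unlike Theorem~\ref{tm1}, the statement here contains no explicit $C_\sigma$ factor or power-of-$n$ rate: the convergence rate is entirely encoded in the abstract remainder function $\phi_\sigma(n)$ from \eqref{eq:26}, and its decay must be extracted separately from the specific growth of the spectral function $\sum_{\lambda_k \leq n}|\phi_k(x)|^2$ in concrete examples.
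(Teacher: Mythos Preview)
Your proposal is correct and follows exactly the paper's own argument: the paper's proof simply states that one uses Lemma~\ref{sobgen} (specifically \eqref{eq:a4}) and Proposition~\ref{prop1gen} (specifically \eqref{eq:13a}) in the orthogonal decomposition $\|f-p_n\|_2^2 = \|f-P_nf\|_2^2 + \|P_nf - p_n\|_2^2$, which is precisely what you do. Your additional remarks on why the Pythagorean identity is valid and on the implicit assumption $\sigma>\scrit$ are accurate and only make the exposition more complete.
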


\begin{proof}
 The proof is identical to the proof of
  Theorem~\ref{tm1}. This is our main point. We simply use Lemma~\ref{sobgen} and
  Proposition~\ref{prop1gen}, precisely, \eqref{eq:a4} and
  \eqref{eq:13a}  in the  decomposition
$$
\|f- p_n \|_2^2 =   \|f- P_nf \|_2^2 +  \|P_nf - p_n \|_2^2 \, .
$$
\end{proof}

Noting that the error estimates also hold for fixed $n$, we obtain the
following  useful consequence. 
\begin{cor}
Assume that $\{ y_k : k=1, \dots ,L\}\subseteq M$ is a sampling set for $\cP _n$
with weights $\tau _k$,
i.e., for some $A,B>0$ and all $p \in \cP _n$
$$
A \|p\|_2^2 \leq \sum _{k=1}^L |p(y_k)|^2 \tau _k \leq B \|p\|_2^2 \,
.
$$
 For $f\in C(M)$ solve  $q = \mathrm{argmin} _{p\in \cP _n} \sum _{k=1}^L
|f(y_k) - p(y_k)|^2 \tau _k$.  Then 
  \begin{align*}
  \|f-q\|_2 &\leq  \big(1+\big(\tfrac{B}{A}\big)^2 \big)^{1/2} \|f-P_nf\|_\infty  \, .
\end{align*}  
\end{cor}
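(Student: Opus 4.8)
The plan is to treat this corollary as the fixed-level specialization of Theorem~\ref{tm1a}: the entire argument for Theorem~\ref{tm1a} uses the sampling constants $A,B$ only through a single layer, so nothing changes if we replace the doubly-indexed family by one sampling set $\{y_k\}$ for $\cP_n$ at a fixed $n$. I would begin with the orthogonal decomposition
$$\|f-q\|_2^2 = \|f-P_nf\|_2^2 + \|P_nf-q\|_2^2,$$
valid because $q\in\cP_n$ and $P_nf$ is the orthogonal projection of $f$ onto $\cP_n$, so that $f-P_nf\perp\cP_n\ni P_nf-q$. For the first summand, since $\mu$ is a probability measure I would use $\|f-P_nf\|_2\le\|f-P_nf\|_\infty$; this sup-norm is finite because $f\in C(M)$ and $P_nf$ is a finite combination of the continuous basis functions $\phi_k$, hence $f-P_nf\in C(M)$ on the compact space $M$.

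For the second summand I would invoke Proposition~\ref{prop1gen}. Its proof (identical to that of Proposition~\ref{prop1}) never uses uniformity in $n$ and holds verbatim for a single sampling set, giving
$$\|P_nf-q\|_2^2 \le A^{-2}B\sum_{k=1}^L |f(y_k)-P_nf(y_k)|^2\,\tau_k.$$

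The one step that needs care --- and the only place where a naive argument would fail --- is bounding this residual sum, because $f-P_nf$ is \emph{not} a polynomial in $\cP_n$ and the upper Marcinkiewicz--Zygmund inequality therefore may not be applied to it. Instead I would bound each sampled value crudely by the sup-norm and pull out the weights:
$$\sum_{k=1}^L |f(y_k)-P_nf(y_k)|^2\,\tau_k \le \|f-P_nf\|_\infty^2 \sum_{k=1}^L \tau_k \le B\,\|f-P_nf\|_\infty^2,$$
where $\sum_k\tau_k\le B$ follows by testing the upper inequality on the constant function $\phi_1\equiv1$ with $\|1\|_2=1$, exactly as in \eqref{eq:7a}. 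This yields $\|P_nf-q\|_2^2\le (B^2/A^2)\,\|f-P_nf\|_\infty^2$, and adding the two summands gives
$$\|f-q\|_2^2 \le \Big(1+\tfrac{B^2}{A^2}\Big)\|f-P_nf\|_\infty^2.$$
Taking square roots produces the asserted estimate. The whole proof is elementary; the only conceptual points are the legitimacy of transferring Proposition~\ref{prop1gen} to a single level and the sup-norm/weight-sum trick that circumvents the non-membership of $f-P_nf$ in $\cP_n$.
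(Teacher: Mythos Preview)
Your proof is correct and follows exactly the approach of the paper: the corollary is obtained by specializing the proof of Theorem~\ref{tm1a} to a single level, combining the orthogonal decomposition, the bound $\|f-P_nf\|_2\le\|f-P_nf\|_\infty$, Proposition~\ref{prop1gen}, and the weight-sum estimate $\sum_k\tau_k\le B$ from~\eqref{eq:7a}. Your emphasis on why the upper \mz\ inequality cannot be applied directly to $f-P_nf$ is well placed and matches the paper's use of the sup-norm/weight-sum trick.
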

The corollary gives a possible answer to  how well a given
function on $M$ can be approximated from samples. Again, the statement
highlights the importance of sampling inequalities in approximation
theoretic problems.

\subsection{Quadrature rules}
\label{sec:Quadrat}

Likewise the derivation and  convergence of the quadrature rules is
completely analogous to Theorem~\ref{tm2}.  The reproducing kernel for $\cP _n$ is given by
$k^{(n)}_x(y) = \sum _{k : \lambda _k \leq n} \overline{\phi _k (x)}
\phi _k(y)$.
Then  the \mz\ inequalities  \eqref{eq:1a}  say that every set  $\{\tkn
^{1/2} \, k^{(n)}_{\xkn
} : k=1, \dots , L_n\}$ is a frame for $\cP _n$ with  uniform frame
bounds $A,B>0$. We thus  obtain the dual
frame $e_{n,k} \in \cP _n$ such that every  polynomial $p \in \cP
_n$ can be reconstructed from the samples on $\cX _n$ by     
\begin{equation}
  \label{eq:17a}
  p = \sum _{k=1}^{L_n} \tkn   \langle p , k^{(n)}_{\xkn } \rangle
  S\inv k^{(n)}_{\xkn } = \sum _{k=1}^{L_n} \tkn ^{1/2} p(\xkn ) \,
  e_{n,k} \,  
\end{equation}
Since $1\in \cP _n$ for all $n$, we have again
\begin{equation}
  \label{eq:a1b}
  \sum _{k=1} ^{L_n} |\langle  1, e_{n,k}  \rangle |^2 \leq A\inv \|1\|_2
  = A\inv \, .
\end{equation}
 The weights for the quadrature rules are defined by 
\begin{equation}
  \label{eq:18a}
  w_{n,k} = \tkn ^{1/2} \langle  e_{n,k} ,1 \rangle =  \tkn ^{1/2} \int
  _M e_{n,k}(x) \, d\mu(x) \, , 
\end{equation}
and the corresponding quadrature rule is defined by 
\begin{equation} \label{eq:18bb}
  I_n(f) = \sum _{k=1}^{L_n} f(\xkn ) w_{n,k} \, .
\end{equation}
  Writing $I(f) = \int _M f(x) \, d\mu(x) $, 
the convergence rules for the quadrature \eqref{eq:18bb} can now be
stated as follows.

\begin{tm}
  \label{tm2a}
Let $\cX $ be a \mz\ family on $M$ with weights $\tau $ and let $\{I_n: n \in \bN \}$ be the
 associated sequence of quadrature rules. Assume that $\sigma
>\scrit $.

(i) If $f\in C(M )$, then 
\begin{equation}
  \label{eq:190b}
  |I(f)  - I_n (f)| \leq (1+\sqrt{\kappa })  \inf _{p \in \cP _n}
  \|f-p\|_\infty  \, .
\end{equation}

(ii) If $f\in \hs $ for $\sigma >\scrit $, then 
\begin{equation}
  \label{eq:19b}
  |I(f)  - I_n (f)| \leq (1+\sqrt{\kappa })  \|f\|_{\hs }
  \phi _\sigma (n)  \, . 
\end{equation}
\end{tm}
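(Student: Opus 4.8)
The plan is to mirror the proof of Theorem~\ref{tm2} line for line, since the quadrature construction of Section~\ref{sec:Quadrat} is the exact analog of the torus construction. Two ingredients are needed, namely the general-space versions of Lemma~\ref{lem-quad}: that $I_n$ is exact on $\cP _n$, and the stability bound $|I_n(f)| \leq \sqrt{\kappa}\,\|f\|_\infty$ for $f \in C(M)$.

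For exactness, I would apply the integral functional $I$ to the reconstruction formula \eqref{eq:17a}, which writes every $p \in \cP _n$ as $p = \sum _{k} \tkn ^{1/2} p(\xkn )\, e_{n,k}$; the definition \eqref{eq:18a} of $w_{n,k}$ then gives $I(p) = \sum _k p(\xkn ) w_{n,k} = I_n(p)$. For the stability bound, Cauchy--Schwarz together with \eqref{eq:a1b} yields $|I_n(f)|^2 \leq \big( \sum _k |f(\xkn )|^2 \tkn \big) \big( \sum _k |\langle e_{n,k}, 1\rangle |^2 \big) \leq A\inv \sum _k |f(\xkn )|^2 \tkn$, and then $\sum _k \tkn \leq B$ from \eqref{eq:7a} converts this into $\tfrac{B}{A}\|f\|_\infty ^2$. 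These are verbatim the arguments already given on the torus.

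With these facts both parts follow from one triangle inequality. For (i), let $q_n$ be a best uniform approximant of $f$ in $\cP _n$; exactness gives $I(q_n) = I_n(q_n)$, so
$$|I(f) - I_n(f)| \leq |I(f-q_n)| + |I_n(f-q_n)| \leq \|f-q_n\|_\infty + \sqrt{\kappa}\,\|f-q_n\|_\infty ,$$
where the first term uses that $\mu $ is a probability measure and the second uses the stability bound; this equals $(1+\sqrt{\kappa })\inf _{p\in \cP _n}\|f-p\|_\infty$. For (ii), I would run the identical estimate with the orthogonal projection $P_nf$ in place of $q_n$ and then invoke Lemma~\ref{sobgen}(ii), $\|f-P_nf\|_\infty \leq \|f\|_{\hs }\,\phi _\sigma (n)$, to turn the uniform error into the stated Sobolev bound.

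I expect no genuine obstacle. The one point worth checking is that the frame-theoretic construction of the dual frame $\{e_{n,k}\}$ and hence of the weights $w_{n,k}$ transfers unchanged to $M$; this holds because the \mz\ inequalities \eqref{eq:1a} say precisely that $\{\tkn ^{1/2} k^{(n)}_{\xkn }\}$ is a frame for $\cP _n$ with uniform bounds $A,B$, so the frame operator is invertible on each $\cP _n$. The role of the hypothesis $\sigma > \scrit$ --- ensuring $\phi _\sigma (n) \to 0$ and thus convergence --- is already absorbed into Lemma~\ref{sobgen}.
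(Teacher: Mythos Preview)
Your proposal is correct and is exactly what the paper intends: the paper does not write out a separate proof for Theorem~\ref{tm2a} but simply declares that the derivation and convergence are ``completely analogous to Theorem~\ref{tm2}'', and your argument faithfully reproduces that proof with $\cT_n$ replaced by $\cP_n$, Lemma~\ref{lem-quad} replaced by its general-space analog (which you correctly rederive from \eqref{eq:17a}, \eqref{eq:a1b}, and \eqref{eq:7a}), and Lemma~\ref{lm1} replaced by Lemma~\ref{sobgen}.
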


Error estimates of this type are, of course, 
well-known, see, e.g. ~\cite{CGT11,BCCG14,HS06}.  
The main difference is in the constants: usually these depend mainly
on  the  mesh size of the points $\xkn $, whereas Theorems~\ref{tm1a}
and \ref{tm2a}  involve only the bounds of the \mz\ inequalities. 

Theorems~\ref{tm1a} and \ref{tm2a} are  pure formalism. Their main
insight is  that
approximation theorems from samples are a direct consequence of the
existence of \mz\ families. Thus  the ``real'' and deep question was and still is how to construct a \mz\
family for a given $M$ and orthonormal basis. This is precisely what
is accomplished in \cite{FM10,FM11} under similar assumptions on an
abstract metric measure space.  

\vspace{2mm}

\noindent \textbf{Example.}
  For the torus $\phi _k (x) = e^{2\pi i kx}$ and
$\lambda _k = k$. Then the error function is 
$$
\phi _\sigma (n)^2  = \sum _{|k|>n} (1+k^2)^{-\sigma } \leq 2 \int
_n^\infty x^{-2\sigma } \, dx = \frac{2}{2\sigma -1} n^{-2\sigma +1}
\, .
$$
Consequently  $\phi
_\sigma (n) = (\sigma - 1/2)^{-1/2} n^{-\sigma +1/2} $ in Lemma~\ref{lm1} and
Theorem~\ref{tm1}. 


\subsection{The spectral function and \mz\ families}

Theorem~\ref{tm1a}, as formulated, is almost void of content, as the
error function $\phi _\sigma $ from \eqref{eq:26} depends on the orthonormal basis and the chosen 
$\lambda _k$ in a rather intransparent manner. With the interpretation
of $(\phi _k,
\lambda _k)$ as the eigenvalues and eigenfunctions of a positive
unbounded operator, the  spectral theory of partial differential
operators suggests a suitable condition to elaborate  the error
function $\phi _\sigma $ further.

\begin{definition}
  We say the orthonormal basis $\{\phi _k:k\in \bN \}$ and the
  eigenvalues $\{\lambda _k: k\in \bN \}$ satisfy Weyl's law, if there
  exist constants $d=d_{\phi,\lambda }>0$ and $C= C_{\phi,\lambda
  }>0$, such that
  \begin{equation}
    \label{eq:weyl}
    \sum _{k: \lambda _k \leq n} |\phi _k(x)|^2 \leq C n^d \, .
  \end{equation}
\end{definition}
Integrating   over $M$, \eqref{eq:weyl} implies the eigenvalue count
\begin{equation}
  \label{eq:weyl2}
\# \{ k: \lambda _k \leq n\}  \leq C n^d \, .  
\end{equation}
In the spectral  theory of partial differential operators or pseudodifferential
operators the function $  \sum _{k: \lambda _k \leq n} |\phi _k(x)|^2$
is called the spectral function, and \eqref{eq:weyl2} is Weyl's law
for the count of eigenvalues~\cite{Gar53,hormander3,Shubin91}. In the theory
of orthogonal polynomials  the function  $ (\sum _{k\leq n} |\phi
_k(x)|^2)\inv $ is called the Christoffel function and  plays a
central role in the investigation of orthogonal
polynomials~\cite{Nev86}. In this context assumption \eqref{eq:weyl}
says that  the Christoffel function along a subsequence  determined by
the  $\lambda _k$'s is bounded
polynomially from below.  

Under the assumption of Weyl's law we can determine the critical
exponent and the asymptotics of the error function precisely.  One may
also say that Weyl's law implies the correct version of the Sobolev
embedding. 
\begin{prop}
  \label{critval}
Let $\{\phi _k:k\in \bN \}$ be an orthonormal basis for $L^2(M,\mu )$
and $\lambda _k \geq 0$ be a non-decreasing sequence with $\lim _{k\to
  \infty } \lambda _k = \infty $. Assume that $(\phi _k, \lambda _k)$
satisfies Weyl's law \eqref{eq:weyl} with exponent $d$. Then the critical value is $\scrit = d/2$, i.e., if $\sigma
  >d/2$, then
  $$
  C_\sigma ^2 = \sup _{x\in M}  \sum _{k=1}^\infty |\phi _k(x)|^2
  (1+\lambda _k^2)^{-\sigma } <\infty \, . 
  $$
Moreover, the error function is
  \begin{equation}
    \label{eq:a5}
    \phi _\sigma (n) = C_\sigma  \, n^{-\sigma +d/2}
  \end{equation}
\end{prop}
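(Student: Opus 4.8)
The plan is to reduce both assertions to a single one-dimensional integral estimate via Abel summation. First I would introduce the spectral counting function $N(t,x) = \sum_{k:\lambda _k \le t} |\phi _k(x)|^2$, which for fixed $x$ is a non-decreasing step function in $t$ with a jump of size $|\phi _k(x)|^2$ at each $t=\lambda _k$. Weyl's law \eqref{eq:weyl} says precisely that $N(n,x)\le Cn^d$ at integers, and since $N(\cdot ,x)$ is non-decreasing this upgrades to $N(t,x)\le C'\max(1,t)^d$ for all real $t\ge 0$, uniformly in $x\in M$; this uniformity is the whole point and is inherited directly from \eqref{eq:weyl}. Writing $g(t) = (1+t^2)^{-\sigma }$, I would then recast the two quantities of interest as Riemann--Stieltjes integrals, namely $\sum _k |\phi _k(x)|^2 g(\lambda _k) = \int _{0^-}^\infty g(t)\, dN(t,x)$ and $\sum _{\lambda _k>n} |\phi _k(x)|^2 g(\lambda _k) = \int _{n^+}^\infty g(t)\, dN(t,x)$.

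Next I would integrate by parts. Since $g$ is smooth and decreasing with $g'(t) = -2\sigma t (1+t^2)^{-\sigma -1}$, Abel summation gives $\int _{0^-}^\infty g\,dN = \big[g(t)N(t,x)\big]_{0^-}^\infty + \int _0^\infty |g'(t)|\,N(t,x)\,dt$, where the lower boundary term vanishes because $N(0^-,x)=0$. The boundary term at infinity is $g(T)N(T,x)\le C'T^d(1+T^2)^{-\sigma }$, which tends to $0$ exactly when $2\sigma >d$; this is where the critical threshold $\scrit = d/2$ is forced. Inserting the uniform bound $N(t,x)\le C'\max(1,t)^d$ then controls everything by the model integral $\int _0^\infty t^{\,d+1}(1+t^2)^{-\sigma -1}\,dt$, which converges if and only if $\sigma >d/2$. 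Taking the supremum over $x$ yields $C_\sigma ^2<\infty $ for $\sigma >d/2$, as in \eqref{eq:a3}, and the divergence of this same integral for $\sigma \le d/2$ shows the threshold is sharp.

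For the error function I would run the identical integration by parts on the truncated range, obtaining $\int _{n^+}^\infty g\,dN = -g(n)N(n,x) + \int _n^\infty |g'(t)|\,N(t,x)\,dt$. The boundary term $-g(n)N(n,x)$ is non-positive and may be discarded for an upper bound, while on $t\ge n\ge 1$ one has $(1+t^2)^{-\sigma -1}\le t^{-2\sigma -2}$, so that $\int _n^\infty |g'|\,N\,dt \le 2\sigma C'\int _n^\infty t^{\,d-2\sigma -1}\,dt = \tfrac{2\sigma C'}{2\sigma -d}\,n^{\,d-2\sigma }$. Taking square roots and the supremum over $x$ in \eqref{eq:26} gives $\phi _\sigma (n)\le \mathrm{const}\cdot n^{-\sigma +d/2}$, which is the asserted rate \eqref{eq:a5}; the constant, which I denote $C_\sigma $, depends only on $\sigma $, $d$, and the Weyl constant. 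As a consistency check, on the torus $d=1$, so $\scrit =1/2$ and the rate $n^{-\sigma +1/2}$ reproduces the elementary computation in the Example.

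I expect the only real care to be bookkeeping rather than genuine difficulty: verifying that the boundary term at infinity vanishes, which is exactly what pins down $\sigma >d/2$; passing from the integer-indexed Weyl bound to a bound valid for all real $t$; and checking that the $x$-dependence enters solely through $N(t,x)$, so that the supremum in $x$ commutes with the integration-by-parts estimate. This last point is what guarantees that the constant $C_\sigma $ and the implied constant in $\phi _\sigma (n)$ are uniform in $x$, exactly as demanded by \eqref{eq:a3} and \eqref{eq:26}.
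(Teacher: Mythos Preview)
Your proof is correct and reaches the same conclusion as the paper's, but by a genuinely different mechanism. The paper decomposes the tail $\sum_{\lambda_k > n}$ into dyadic shells $\{k : 2^m \le \lambda_k < 2^{m+1}\}$, applies Weyl's law on each shell to bound $\sum_{k:2^m\le\lambda_k<2^{m+1}}|\phi_k(x)|^2$ by $C\,2^{(m+1)d}$, and then sums the resulting geometric series $\sum_{m\ge M} 2^{-2m\sigma}\,2^{(m+1)d}$, which converges precisely when $\sigma>d/2$. You instead package Weyl's law into the counting function $N(t,x)$ and run Abel summation against the smooth decreasing weight $(1+t^2)^{-\sigma}$.

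These are the discrete and continuous incarnations of the same principle: trade a weighted tail sum for the growth of partial sums against the decay of the weight. Your route makes the appearance of the threshold $\sigma>d/2$ very transparent (it is exactly where the boundary term $g(T)N(T,x)$ vanishes and where $\int^\infty t^{d-2\sigma-1}\,dt$ converges), and it delivers the clean constant $\tfrac{2\sigma C'}{2\sigma-d}$ in place of the paper's $\tfrac{C\,2^d}{1-2^{d-2\sigma}}$. The dyadic argument, on the other hand, avoids any Stieltjes bookkeeping and the small care needed at the lower endpoint when some $\lambda_k=0$. One minor overreach: your claim that divergence of the model integral for $\sigma\le d/2$ shows the threshold is \emph{sharp} would require a matching \emph{lower} bound $N(t,x)\gtrsim t^d$, which Weyl's law as stated in \eqref{eq:weyl} does not supply; the paper's proof does not claim this direction either, so this does not affect the proposition as formulated.
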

\begin{proof}
  We only show \eqref{eq:a5}. Choose $M \in \bN$, such that $2^{M}
  \leq n \leq 2^{M+1}$. We  split the sum defining $\phi _\sigma $
  into dyadic blocks and use \eqref{eq:weyl} as follows:  
  \begin{align*}
  \sum _{\lambda _k >n}^\infty |\phi _k(x)|^2
    (1+\lambda _k^2)^{-\sigma }     & \leq   \sum _{n=M}^\infty \sum _{k: 2^n \leq
    \lambda _k <  2^{n+1}} |\phi _k(x)|^2   (1+\lambda _k^2)^{-\sigma } \\
    &\leq  \sum _{n=M}^\infty 2^{-2n \sigma } \sum _{k: 2^n \leq
      \lambda _k <  2^{n+1}} |\phi _k(x)|^2  \\
    &\leq   C \sum _{n=M} ^\infty 2^{-2n \sigma } 2^{d(n+1)} 
     =  C 2^d\sum _{n=M} ^\infty 2^{n(d-2\sigma )} \\
    &= C \frac{2^d}{ (1-2^{d-2\sigma } )} \,  2^{M(d-2\sigma ) }  \leq
      C_\sigma n^{-2\sigma +d}  \, ,
  \end{align*}
with convergence precisely for $\sigma > d/2$.   
\end{proof}

\noindent \textbf{Example.} Let $-\Delta $ be the Laplacian on a
bounded domain $M\subseteq \rd $ with $C^\infty$-boundary and $\phi
_k$ be its  eigenfunctions, 
i.e., $-\Delta \phi _k =
\lambda _k^2 \phi _k$. 
Then the $(\phi _k,
\lambda _k)$'s satisfy Weyl's law with exponent $d$ and the constant is
roughly $C= \mathrm{vol}\,  (M)$.  Similar statements hold for the
Laplace-Beltrami operator on a compact Riemannian manifold and for
general elliptic partial differential operators, see~\cite{Gar53,hormander68,hormander3,Shubin91}

Weyl's law plays an important role in  the theory and construction of \mz\
families in metric measure spaces by Filbir and
Mhaskar~\cite{FM10,FM11}. In particular, they show that Weyl's law is
equivalent to Gaussian estimates for the heat kernel. A weaker form of
Weyl's law is equivalent to a sampling inequality~\cite{Pes16}.





\vspace{ 2mm}

\noindent\textbf{Concluding remarks.} The simplicity of the proofs
lead to conceptual insights into the role of \mz\ families, but the
results are certainly limited.

(i) The proofs work only for $p=2$.  \mz\ families with respect to
general $p$-norms seem to require different techniques. 

(ii) The weights for the  quadrature rules are not necessarily
positive. So far, positive weights  are obtained only with special
constructions from \mz\
families with $p=1$ and sufficiently dense sampling on each
level~\cite{FM10}.

(iii) The existence of \mz\ families has been established on many
level of generality~\cite{FM11,FFP16} through the construction of
sufficiently fine meshes on each level. So far necessary density
conditions have been found only for \mz\ on compact Riemannian
manifolds~\cite{OP12}. It would be interesting to extend the scope of
the necessary conditions to the conditions used in Section~3 and to
compare the densities of the existing \mz\ families to the necessary
density conditions.


\end{document}